\definecolor{mygreen}{RGB}{28,172,0} 
\definecolor{mylilas}{RGB}{170,55,241}
\newtheorem{remark}{Remark}
\newtheorem{theorem}{Theorem}
\newtheorem{coro}[theorem]{Corollary}
\def\ps@pprintTitle{%
 \let\@oddhead\@empty
 \let\@evenhead\@empty
 \def\@oddfoot{}%
 \let\@evenfoot\@oddfoot}
\begin{document}

\title{Controlling dynamics of a COVID--19 mathematical model using a parameter switching algorithm}
\vspace{5mm}

\author [rm1,rm2]{Marius-F. Danca}

\address[rm1]{Romanian Institute of Science and Tecçhnology, Cluj-Napoca, Romania,}
\address[rm2]{ STAR-UBB Institute, Babes-Bolyai University, Cluj-Napoca, Romania,}

\begin{abstract}
In this paper the dynamics of an autonomous mathematical models of COVID-19 depending on a real parameter bifurcation, is controlled by switching periodically the parameter value. For this purpose the Parameter Switching (PS) algorithm is used. With this technique, it is proved that every attractor of the considered system can be numerically approximated and, therefore, the system can be determined to evolve along, e.g., a stable periodic motion or a chaotic attractor. In this way, the algorithm can be considered as a chaos control or anticontrol (chaoticization)- like algorithm. Contrarily to existing chaos control techniques which generate modified attractors, the obtained attractors with the PS algorithm belong to the set system attractors. It is analytically shown that using the PS algorithm, every system attractor can be expressed as a convex combination of some existing attractors. Moreover, is proved that the PS algorithm can be viewed as a generalization of Parrondo's paradox.

\end{abstract}

\begin{keyword}Parameter switching algorithm; Numerical attractor; COVID-19 mathematical model
\end{keyword}

\maketitle




\section {Introduction}
For a general nonlinear dynamical system, it is almost impossible to determine analytically its attractors. Therefore, generally the studies on nonlinear dynamics including invariant manifolds, attraction basins, heteroclinic and homoclinic orbits, Smale horseshoes, chaotic attractors rely on numerical analysis.
Under a variety of Lipschitz conditions, numerical methods for continuous-time dynamical systems, such as Runge-Kutta methods, define discrete dynamical systems. The comparison of the asymptotic behavior of the underlying dynamical system with the asymptotic behavior of its numerical discretization obtained with a convergent numerical scheme for ODEs is compared in \cite{bu,bu2}.
The obtained numerical approximations represent an important and natural part of a systematic analysis. Thus, if one considers an attractor $A$ of the considered dynamical system, the discrete dynamical system generated by a convergent numerical method can also have an attractor that converges to $A$ \cite{lars} (see also \cite{nou}).

As an important and natural part of a systematic analysis, numerical approximations of attractors are considered in this paper. Therefore, the notion attractor is understood here as being the \emph{numerical attractor}, obtained with some convergent numerical method (see e.g. \cite{atr,bu}), after transients are neglected.

Beside classical studies of mathematical models used to explain disease processes such as in \cite{biblio3,biblio1}, after the arrival of COVID-19 at the end of December 2019, several works on this pandemic have been published (see e.g. \cite{biblio4,biblio5,biblio6} and references therein).

In the context of the harms produced by the COVID-19, increased the interest of assessing the epidemic tendency.
Based on two official data sets, the National Health Commission of the People's Republic of China \cite{china} and the
Johns Hudson University \cite{jon}, a novel mathematical model of COVID-19 pandemic is proposed by Mangiarotti et al. in \cite{jon} is considered in this paper (see also \cite{gras}, where a fractional-order variant of the model is analyzed).

A variant of the model presented in \cite{jon} is described by the following Initial Value problem (IVP)
\begin{equation}\label{un1}
\begin{array}
[c]{cl}%
\overset{\cdot}{x}_{1}(t)= & -0.1053x_3^2(t) + 2.3430\times10^{-5} x_1^2(t) + 0.1521x_2(t)x_3(t)-0.0018x_1(t)x_2(t),\\
\overset{\cdot}{x}_{2}(t)= & 0.1606x_3^2(t)+\textbf{0.4404071}x_2(t)-\textbf{0.205\textbf{}2}x_1(t),\\
\overset{\cdot}{x}_{3}(t)= & \textbf{0.2845}x_3(t)-0.0001x_1(t)x_3(t)-1.2155\times10^{-5}x_1(t)x_2(t)+ 2.3788\times10^{-6}x_1^2(t),\\&x_1(0)=x_{10},~x_2(0)=x_{20}, ~x_3(0)=x_{30},
\end{array},
\end{equation}
where $x_1$ represents the daily number of new cases, $x_3$ represents the daily number of new deaths and $x_2$ represents the daily additional severe cases and $p$ is a real positive parameter. The significance of the bold coefficients is explained in Section \ref{sec1}.

The analysis presented in \cite{jon} show that the global modelling approach, could be useful for
decision makers to monitor the efficiency of control measures and to foresee the extent of the outbreak at various scales and also shows that the system could be used to adapt more classical modelling approaches to ensure mitigation and, hopefully, eradication of the disease.

In this paper we show both analytically and numerically that, via the Parameter Switching (PS) algorithm introduced in \cite{danprima}, the COVID-19 outbreak behavior can be controlled by switching periodically some of the system parameters.
In \cite{danprima} it is shown numerically that many of known systems can be controlled in the following sense: switching $p$ within a given set of values, in some deterministic manner (or even random \cite{danrand}) while the underlying Initial Value Problem (IVP) is numerically integrated, one can approximate some desired attractor with sufficiently small error. The convergence of the PS method is proved in \cite{dan2} (see \cite{dan3,dan1} for other proof approaches). In \cite{danx1} is analytically and also numerically proved the possibility to express any numerical attractor of a given dynamical system as a convex combination of some other existing attractors via the PS algorithm. The algorithm can be used both for theoretical studies of dynamical systems modeled such as synchronization \cite{dan7}, chaos control and anticontrol, or as generalization of the Parrondo paradox (see e.g. \cite{dan6,dan8}), or experimentally as well, like the implementation on real systems, e.g., electronic circuits \cite{dan4}.

In 1998 at the receipt of the Steele Prize for Seminal Contributions to Research, Zeilberger said that ``combining different and sometimes opposite approaches and view-points will lead to revolutions. So the moral is: Don\textsc{\char13}t look down on any activity as inferior, because two ugly parents can have beautiful children''. This paradox can be symbolically written as
``losing + losing = winning'' or, moreover, ``chaos+chaos=order'' or ''order+order=chaos''. The PS algorithm allows to implement this paradoxical game to a large class of dynamical systems which includes Lorenz system, Chen system, Roessler system, etc., to obtain chaos control-like or anticontrol-like (chaoticization). For Parrondo's paradox see \cite{par1,par2,par3}, few of the numerous works and therein references.

In this paper using the PS algorithm, it is shown that unwanted chaotic or regular behaviors of the system \eqref{un1} can be controlled in the sense that the system can be determined to evolve along of some desired regular or chaotic trajectory, respectively. Moreover, based on the attractor decomposition result in \cite{danx1}, every attractor of the system \eqref{un1} can be decomposed as function of other attractors.

\section{Parameter Switching algorithm}

In this section, the main properties of the PS algorithm are shortly presented (details can be found e.g. in \cite{danx1}).

Many single-parameter chaotic dynamical systems, such as the Lorenz system, R\"{o}ssler system, Chen system, Lotka--Volterra system, Hindmarsh-Rose system, etc. can be modeled as the following IVP
\begin{equation}\label{e0}
\dot x(t)=f(x(t)):=g(x(t))+pBx(t),\quad x(0)=x_0,
\end{equation}
\noindent where $t\in I=[0,T]$, $x=(x_1,x_2,...,x_n)^t$, $x_0\in \mathbb{R}^n$, $p\in \mathbb{R}$ is the bifurcation parameter, $B\in \mathbb{R}^{n\times n}$ a constant matrix, and $g : \mathbb{R}^n \rightarrow\mathbb{R}^n$ a continuous nonlinear function.

Because of the autonomous nature of systems \eqref{e0}, for simplicity, hereafter the time variable $t$ will be dropped.

An example of dynamical systems modeled by the IVP \eqref{e0} is the Lorenz system
\begin{equation*}
\begin{array}
[c]{cl}%
\overset{\cdot}{x}_{1}= & \sigma(x_{2}-x_{1}),\\
\overset{\cdot}{x}_{2}= & x_{1}(\rho-x_{3})-x_{2},\\
\overset{\cdot}{x}_{3}= & x_{1}x_{2}-\beta x_{3},
\end{array}
\label{lorenz}%
\end{equation*}
\noindent where $n=3$ with $a=10$ and $c=8/3$, if one considers $p=\rho$ (parameters $\sigma$ and $\beta$ can also be $p$) then
\[
g(x)=\left(
\begin{array}
[c]{c}%
\sigma(x_{2}-x_{1})\\
-x_{1}x_{3}-x_{2}\\
x_{1}x_{2}-\beta x_{3}%
\end{array}
\right)  ,~~B=\left(
\begin{array}
[c]{ccc}%
0 & 0 & 0\\
1 & 0 & 0\\
0 & 0 & 0
\end{array}
\right).
\]

Consider the following assumptions

\noindent
\begin{enumerate}\label {asu1}
\item [\textbf{H1}] Function $f$ in \eqref{e0} is Lipschitz continuous.
\end{enumerate}

\noindent
\begin{enumerate}\label {asu2}
\item [\textbf{H2}] To integrate the system \eqref{e0}, an explicit single $h$ step-size convergent numerical method is used.
\end{enumerate}

Under \textbf{H1}, with an admissible initial condition $x_0$ for any $p$, the IVP \eqref{e0} admits a unique and bounded solution.

Using methods assumed in \textbf{H2} is necessary only to explain the evolution of the PS algorithm. In this paper the utilized numerical method is the Standard Runge-Kutta (RK4).

Consider a dynamical system be modeled by the IVP \eqref{e0}. Denote a set of $N>1$ parameter values of $p$ by $\mathcal{P}_N=\{p_1,p_2,...,p_N\}$, $p_i\in \mathbb{R}$, $i=1,2,...,N$.
 The numerical method is used for solving the IVP \eqref{e0} on the discrete time nodes $nh$, $n=1,2,...$ of the discretized interval $I$.

Because of the solution uniqueness ensured by the Lipschitz continuity, one can consider that to each parameter $p_i\in\mathcal{P}_N$, $i\in\{1,2,...,N\}$, there corresponds a unique attractor $A_i$. Denote $\mathcal{A}_N=\{A_1,A_2,...,A_N\}$ the set of underlying attractors. Also, consider the set $\mathcal{P}_N$ ordered: $p_1<p_2<...<p_N$ \cite{danx1} (Fig.\ref{fig1}).

With above ingredients by switching $p$ within the set $\mathcal{P}_N$ according to a certain periodic rule while the IVP \eqref{e0} is numerically integrated with the RK4 method, the PS algorithm allows the approximation of any attractor of system \eqref{e0}.

Suppose one intends to generate some attractor, denoted $A^o$, corresponding to the value $p:=p^0$ but which, by some objective reasons, cannot be generated by integrating the IVP with this value of $p$ (situation often encountered in real dynamical systems).

The first step is to choose a set $\mathcal{P}_N$ such that the ends of the ordered set, $p_1$ and $p_N$, verify the relation (see Fig. \ref{fig1})

\begin{equation}\label{ordine}
p_1<p^0<p_N.
\end{equation}

For a given step-size $h>0$, the PS algorithm can be symbolized with the following scheme:

\begin{equation}\label{s0}
S:=[m_1\circ p_1,m_2\circ p_2,...,m_N\circ p_N]_h,
\end{equation}

\noindent where $\mathcal{M}_N=\{m_1,m_2,...,m_N\}$, $m_i\in \mathbb{N}^*$, $i=1,2,...,N$, denotes the ``weights'' of $p_i$ values.
The term $m_i\circ p_i$ indicates the number of $m_i$ times for which the parameter $p$ will take the value $p_i$, for $i\in\{1,2,...,N\}$ while the IVP is integrated.
Therefore, the scheme \eqref{s0} reads as follows: while the IVP \eqref{e0} with initial condition $x_0$ is numerically integrated with the fixed step-size method, for the first $m_1$ integration steps, at the nodes $nh$, $n=1,2,...,m_1$, $p$ will take the value $p_1$; for the next $m_2$ steps (at the nodes $nh$ with $n=m_1+1,m_1+2,...,m_2$), $p=p_2$; and so on, till the last $m_N$ steps, where $p=p_N$. Next, the algorithm repeats and begins with $p=p_1$ for $m_1$ times, and so on until the entire considered time integration interval $I$ is covered by the numerical integration. Therefore, the switching period of $p$, which is piece-wise constant function, is $\sum_{i=1}^Nm_ih$.

\vspace{3mm}
\noindent For simplicity, hereafter the index $h$ in \eqref{s0} is dropped.
\vspace{3mm}

For example the scheme $S=[3\circ p_1,2\circ p_2]$ indicates that the PS algorithm acts as follows: the integration over the first 3 consecutive steps, $p$ will take the value $p_1$, next, for the 2 consecutive steps $p$ will take the value $p_2$ after which the processus repeats.

Denote the solution obtained with the PS method, starting from the initial condition $y_0$, by $y_n$, and call it the \emph{switched solution}, and the solution, $x_n$, from the initial condition $x_0$, obtained by integrating the IVP \eqref{e0} with $p:=p^0$, where \cite{danx1}
\begin{equation}\label{p}
p^0=\frac{\sum_{i=1}^Nm_ip_i}{\sum_{i=1}^Nm_i},
\end{equation}
the \emph{averaged solution}. Also, the attractor corresponding to $p^0$, denoted $A^0$, is called the \emph{averaged attractor}, while the attractor obtained with the PS algorithm, denoted $A^*$, is called the \emph{switched attractor}.

In \cite{dan2,dan1,dan3} is proved that the attractor $A^0$ is approximated by the switched attractor $A^*$ generated by the PS method, the approximation being denoted $A^*\approx A^0$ and in \cite{danprima} the match between $A^*$ and $A^0$ is verified numerically by time series, histograms, Poincar\'{e} sections and Haussdorff distance.

\begin{remark}\label{remus1}
For a given $N$, the scheme \eqref{s0} is usually not unique: there are several sets $\mathcal{M}_N$ and $\mathcal{P}_N$ which generate the same value of $p^0$ via formula \eqref{p}.
\end{remark}

\begin{coro}\label{thx}
~\vspace{-.8em}
~\begin{itemize}
\itemsep-.3em
\item[i)] For each given sets $\mathcal{P}_N$ and $\mathcal{M}_N$, $A^*\approx A^0$, with $p^0$ given by \eqref{p};
\item[ii)] For each attractor $A$ of the considered system \eqref{e0}, there exists the set $\mathcal{P}_N$, such that $p_1<p^0<p_N$, and the set of weights $\mathcal{M}_N$, $N>1$, such that $A$ can be approximated by the PS method.
\end{itemize}
\end{coro}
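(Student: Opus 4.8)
The plan is to prove Corollary~\ref{thx} as an essentially immediate consequence of the convergence of the PS algorithm established in \cite{dan2,dan1,dan3} together with the averaging formula \eqref{p}. Part (i) is the direct statement of that convergence result: once the sets $\mathcal{P}_N$ and $\mathcal{M}_N$ are fixed, the scheme \eqref{s0} determines a single averaged value $p^0$ via \eqref{p}, and the cited convergence theorem guarantees that the switched attractor $A^*$ approximates the averaged attractor $A^0$ corresponding to $p^0$. So for (i) I would simply invoke the proved convergence $A^*\approx A^0$ under the standing hypotheses \textbf{H1} and \textbf{H2}, noting that Lipschitz continuity (\textbf{H1}) guarantees a unique bounded solution and hence a well-defined attractor for each parameter value, and that the explicit convergent one-step method (\textbf{H2}) is what the convergence proof requires.

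The substance of the statement is part (ii), which is a converse: given an arbitrary attractor $A$ of \eqref{e0}, produce sets $\mathcal{P}_N$ and $\mathcal{M}_N$ realizing it. The key observation is that $A$ corresponds to some parameter value $p^0$, and by part (i) it suffices to exhibit a finite set of parameter values and positive integer weights whose weighted average equals $p^0$. First I would pick any $N>1$ and any ordered values $p_1<\dots<p_N$ straddling $p^0$ so that $p_1<p^0<p_N$ holds, which is always possible since the admissible parameter range is an interval. The remaining task is to choose weights $m_i\in\mathbb{N}^*$ so that $\frac{\sum_{i=1}^N m_i p_i}{\sum_{i=1}^N m_i}=p^0$. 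I would reduce this, already for $N=2$, to solving $m_1(p^0-p_1)=m_2(p_2-p^0)$ for positive integers $m_1,m_2$: writing $\lambda=(p_2-p^0)/(p^0-p_1)>0$, one needs $m_1/m_2=\lambda$. When $\lambda$ is rational this is immediate; the only genuine obstacle is the case of irrational $\lambda$, where exact equality is unattainable with integer weights.

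The main obstacle, then, is precisely this rationality issue, and I would resolve it in the spirit of the paper's numerical (rather than exact) notion of attractor. Since the weights $m_i$ are integers, the attainable values of $p^0$ form a dense subset of $(p_1,p_N)$; hence for any target $p^0$ and any tolerance $\varepsilon>0$ one can choose $\mathcal{M}_N$ so that the realized average lies within $\varepsilon$ of $p^0$. Because the attractor $A$ is understood as a numerical attractor obtained after transients are neglected, and because solutions depend continuously on the parameter (a consequence of \textbf{H1}), an $\varepsilon$-approximation of $p^0$ yields an approximation of $A$ by the corresponding switched attractor $A^*$, which is exactly the sense of ``can be approximated'' asserted in (ii). I would therefore state the argument as: choose $\mathcal{P}_N$ straddling $p^0$, invoke density of rational weighted averages to match $p^0$ to within the prescribed tolerance, and then apply part (i) to conclude $A^*\approx A^0=A$. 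This keeps the proof short and leans on the already-proved convergence, with the only careful point being the passage from exact to approximate realization of $p^0$.
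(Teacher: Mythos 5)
Your part (i) is fine and is essentially the paper's own approach: the paper's Appendix~A is nothing more than a sketch of the convergence mechanism you invoke (linearize the switched system \eqref{hhh} and the averaged system \eqref{ppp} around the averaged solution $s$ and show $\lim_{t\rightarrow\infty}\|e(t)-\varepsilon(t)\|=\delta(\lambda^2)$), i.e.\ the result of \cite{dan1,dan2,dan3}. The problem is in your part (ii). Having fixed $\mathcal{P}_N$ \emph{first}, you correctly observe that integer weights can only realize averages $p^0$ for which $(p_2-p^0)/(p^0-p_1)$ is rational, and you repair this by density of the attainable averages plus ``continuous dependence on the parameter.'' That last step is a genuine gap: \textbf{H1} gives continuous dependence of solutions on $p$ only on \emph{finite} time intervals, and this does not pass to the asymptotic objects. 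Attractors of systems like \eqref{asta} do not vary continuously with $p$: the bifurcation diagram (Fig.~\ref{fig2}(b)) shows thin periodic windows interleaved with chaotic bands, so a parameter value within $\varepsilon$ of $p^0$ can carry a stable cycle while $p^0$ carries a chaotic attractor (or vice versa), and these are not close in any Hausdorff-type sense no matter how small $\varepsilon$ is. So the inference ``realized average close to $p^0$ $\Rightarrow$ switched attractor close to $A$'' is false precisely in the regime the paper cares about.

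The repair is simpler than your density argument and removes the rationality issue entirely: in (ii) \emph{both} $\mathcal{P}_N$ and $\mathcal{M}_N$ are at your disposal, so choose them to hit $p^0$ exactly. For instance take $N=2$, $p_1=p^0-\delta$, $p_2=p^0+\delta$ for any small admissible $\delta>0$, and $m_1=m_2=1$; then \eqref{p} gives exactly $p^0$, the framing condition \eqref{ordine} holds, and part (i) yields $A^*\approx A^0=A$ with no limiting argument in the parameter. (This is exactly what the paper does in its examples, e.g.\ $p_1=0.422$, $p_2=0.424$ with equal weights for $p^0=0.423$; Remark~\ref{remus1} records the non-uniqueness of such choices.) The irrational-ratio obstacle you spend the last paragraph circumventing only exists if $\mathcal{P}_N$ is imposed in advance, which statement (ii) does not require.
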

\vspace{-1.2em}
\begin{proof} See Appendix A for a sketch of the proof.
\end{proof}
Denote next

\begin{equation}\label{al}
\alpha_j:=m_j/\sum_{i=1}^N m_i,~~ j=1,2,...,N.
\end{equation}
Since $\sum_{i=1}^N\alpha_i=1$, it is easy to see that $p^0$ is a convex combination of $p_i$, $i=1,2,3,...,N$, $p^0=\sum_{i=1}^N\alpha_i p_i$.

In \cite{danx1} it is proved that the set $ \mathcal{A}$, can be endowed with two binary relations (operators) $(\mathcal{A}, \oplus, \otimes )$, with $\oplus$ being \emph{addition of attractors} and $\otimes$ being \emph{multiplication of attractors by positive real numbers}. In this way,
 the following result presents a new modality to describe the averaged attractor $A^0$ as a convex-like combination of the attractors $A_i$, $i=1,2,3,...,N$.

\begin{coro}\label{propos}
For given sets $\mathcal{P}_N$ and $\mathcal{M}_N$, the average attractor $A^0$, corresponding to $p^0$ given by \eqref{p}, can be expressed as
\begin{equation}\label{a0}
A^0=\alpha_1\otimes A_1\oplus\ldots\oplus\alpha_N\otimes A_N.
\end{equation}
\end{coro}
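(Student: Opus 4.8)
The plan is to reduce the statement to the PS convergence result already recorded in Corollary \ref{thx}, combined with the elementary averaging identity forced by the affine dependence of \eqref{e0} on $p$. Writing $f_p(x)=g(x)+pBx$ for the right-hand side of \eqref{e0}, and using that $\sum_{i=1}^N\alpha_i=1$ for the $\alpha_i$ of \eqref{al} together with \eqref{p}, one has
\begin{equation*}
\sum_{i=1}^N\alpha_i f_{p_i}(x)=\Big(\sum_{i=1}^N\alpha_i\Big)g(x)+\Big(\sum_{i=1}^N\alpha_i p_i\Big)Bx=g(x)+p^0Bx=f_{p^0}(x).
\end{equation*}
Thus the flow producing the averaged attractor $A^0$ is exactly the $\alpha_i$-weighted superposition of the flows producing the $A_i$; this linearity in $p$ is precisely what makes a convex-combination formula at the level of attractors plausible, and it is the mechanism underlying the convergence $A^*\approx A^0$ of Corollary \ref{thx}.

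Next I would invoke the definitions of the two operators from \cite{danx1}: $\otimes$ rescaling an attractor by a positive weight and $\oplus$ superposing two attractors, both built so as to mirror on the set $\mathcal{A}$ the corresponding operations on the underlying vector fields (equivalently, on the bifurcation parameter through the correspondence $p_i\leftrightarrow A_i$ guaranteed by Lipschitz uniqueness under \textbf{H1}). With these definitions in force, the combination $\alpha_1\otimes A_1\oplus\cdots\oplus\alpha_N\otimes A_N$ is, by construction, the attractor generated by switching $p$ among the $p_i$ with time-fractions $\alpha_i$, i.e. the switched attractor $A^*$ of the scheme \eqref{s0} with weights $m_i$.

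The conclusion then follows by chaining the two facts: by Corollary \ref{thx}(i) the switched attractor obeys $A^*\approx A^0$ with $p^0$ as in \eqref{p}, while by the identification just made $A^*=\alpha_1\otimes A_1\oplus\cdots\oplus\alpha_N\otimes A_N$; hence $A^0=\alpha_1\otimes A_1\oplus\cdots\oplus\alpha_N\otimes A_N$ in the sense of numerical attractors, which is \eqref{a0}.

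The hard part will be the middle step: establishing that the intrinsically defined algebraic combination on $\mathcal{A}$ genuinely coincides with the operationally defined switched attractor $A^*$, and that it is independent of the chosen representation. Because of the non-uniqueness noted in Remark \ref{remus1}—many pairs $(\mathcal{P}_N,\mathcal{M}_N)$ yield the same $p^0$—the operators $\oplus$ and $\otimes$ must be shown well defined on the relevant equivalence classes, and one must verify that they commute with passing to the attractor of the averaged (switched) flow. Once this compatibility from \cite{danx1} is in hand, the averaging identity above and the approximation $A^*\approx A^0$ close the argument immediately.
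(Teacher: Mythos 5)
There is a genuine gap in your middle step, and it is precisely the point where your route diverges from the paper's. In Appendix B the operations $\oplus$ and $\otimes$ are \emph{not} defined operationally through the switching scheme; they are defined through a linear, order-preserving bijection $H:\mathcal{P}\rightarrow\mathcal{A}$ between parameters and attractors (its existence being the natural consequence of \textbf{H1} and the uniqueness of the attractor for each $p$), by $\alpha\otimes A:=H(\alpha H^{-1}(A))$ and $A_1\oplus A_2:=H(H^{-1}(A_1)+H^{-1}(A_2))$. With these definitions the proof is a one-line computation in parameter space: $\alpha_1\otimes A_1\oplus\cdots\oplus\alpha_N\otimes A_N=H(\alpha_1p_1+\cdots+\alpha_Np_N)=H(p^0)=A^0$, using \eqref{al} and \eqref{p}. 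No convergence result is invoked; the identity \eqref{a0} is exact by construction, and it is automatically independent of which pair $(\mathcal{P}_N,\mathcal{M}_N)$ realizes $p^0$ (Remark \ref{remus1}), since only the value $H(p^0)$ ever appears.

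Your proposal instead asserts that the combination $\alpha_1\otimes A_1\oplus\cdots\oplus\alpha_N\otimes A_N$ is ``by construction'' the switched attractor $A^*$, and then closes with Corollary \ref{thx}(i). That identification is not the construction, and it cannot be taken for granted: $A^*$ depends on the step size $h$, the weights $m_i$, and the finite integration time, whereas the algebraic combination must not. You correctly flag this well-definedness/compatibility issue as ``the hard part,'' but you leave it unproven, so the argument is incomplete exactly where it needs substance; the $H$-based definition is what eliminates this issue entirely. Moreover, even granting the identification, chaining $A^*=\alpha_1\otimes A_1\oplus\cdots\oplus\alpha_N\otimes A_N$ with $A^*\approx A^0$ yields only an approximation of $A^0$ for finite $h$, not the stated equality: in the paper the equality is exact, and the PS algorithm enters only afterwards as its numerical realization. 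Finally, your opening averaging identity $\sum_i\alpha_i f_{p_i}=f_{p^0}$ is correct but does no work for the algebraic decomposition itself; it belongs to the convergence argument of Corollary \ref{thx} (Appendix A), not to this corollary.
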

\begin{proof}
See the sketch of the proof in Appendix B.
\end{proof}

\section{Control and anticontrol of the COVID-19 system \eqref{e0} with PS algorithm}\label{sec1}
If any of the bold constants in the system \eqref{un1} is consider as being the bifurcation parameter $p$, the system belongs to the class of systems \eqref{e0}. Consider one of the three possible choices of $p$ as coefficient of the variable $x_2$ in the second equation

\begin{equation}\label{asta}
\begin{array}
[c]{cl}%
\overset{\cdot}{x}_{1}= & -0.1053x_3^2 + 2.3430\times10^{-5} x_1^2 + 0.1521x_2x_3-0.0018x_1x_2,\\
\overset{\cdot}{x}_{2}= & 0.1606x_3^2+px_2-0.205x_1,\\
\overset{\cdot}{x}_{3}= & 0.2845x_3-0.0001x_1x_3-1.2155\times10^{-5}x_1x_2+ 2.3788\times10^{-6}x_1^2.
\end{array}.
\end{equation}

For simplicity, in Fig. \ref{fig2} is presented the bifurcation diagram of the first variable $x_1$ versus $p$, wherefrom one can see that the system presents a rich behavior.

 Note that due to the discrepancy between the very large values of the variables of $10^3$ order on one side and the very small values of the coefficients of order of about $10^{-5}$ on the other side, the numerical integrators used for the system \eqref{asta}, encounter difficulty in giving accurate results (see e.g. encircled parts in Fig. \ref{fig2} (a) and Fig. \ref{fig6} (a)).

As can be seen, the system \eqref{asta} belongs to the class of systems modeled by the IVP \eqref{e0} with

\[
g(x)=\left(
\begin{array}
[l]{l}%
-0.1053x_3^2 + 2.3430\times10^{-5} x_1^2 + 0.1521x_2x_3-0.0018x_1x_2\\
0.1606x_3^2-0.205x_1\\
0.2845x_3-0.0001x_1x_3-1.2155\times10^{-5}x_1x_2+ 2.3788\times10^{-6}x_1^2%
\end{array}
\right)  ,
\]
and
~~
\[B=\left(
\begin{array}
[c]{ccc}%
0 & 0 & 0\\
0 & 1 & 0\\
0 & 0 & 0
\end{array}
\right).
\]
Consider next some of the most representative cases.

\emph{Examples}
\begin{enumerate}
\itemsep-.3em

\item \label{exx1}Consider first one intends to force the system to evolve along the stable cycle corresponding to the parameter value $p=0.423$ with the PS algorithm (see the zoomed image in Fig. \ref{fig1} b)), i.e. to approximate $A^0$, with $p_0=0.423$, with the attractor $A^*$ obtained with the PS algorithm. For this purpose, one can find two other values $p_1$ and $p_2$ such that $p_1<0.423<p_2$ (see relation \eqref{ordine}). Let the values of $p_{1,2}$ and related weights $m_{1,2}$ such that the relation \eqref{p} gives $p^0=0.423$. On of the simplest choices is $p_1=0.422$, $p_2=0.424$ and $m_1=m_2=1$ (see Fig.\ref{fig1} b)). Therefore: $p^0=(m_1p_1+m_2p_2)/(m_1+m_2)=0.423$ and the PS algorithm will act with the switching scheme $[1\circ p_1,1\circ p_2]$. The obtained switched attractor $A^*$ approximates the averaged attractor $A^0$, corresponding to $p^0=0.423$, $A^*\approx A^0$, as can be seen in Fig. \ref{fig3} a) where, both attractors are overplotted after transients are discarded (red and blue, respectively). The match between the two attractors is also verified by Poincar\'{e} section with the plane $x_3=80$ (Fig. \ref{fig3} b) and histograms (Figs. \ref{fig3} c) and d) respectively).

\item \label{exx2}Another stable periodic motion, corresponding to $p=0.43$, can be obtained with the PS algorithm via the scheme $[1\circ p_1,1\circ p_2, 3\circ p_3]$ with $p_1=0.4265$, $p_2=0.4287$ and $p_3=0.4316$ (Fig. \ref{fig2} b) and $m_1=m_2=1$ and $m_3=3$. As the bifurcation diagram shows, around the value $p=0.43$, the window around this parameter value contains merged very interleaved thin periodic and chaotic windows and, therefore, the considered cycle is difficult to approximate.
However, with acceptable error, the switched attractor $A^*$ (red plot in Figs. \ref{fig4}) approximates the averaged attractor corresponding to $p=0.43$ (blue plot Figs. \ref{fig4}). Figs. \ref{fig4} a-d) show the phase plot of the two attractors, their Poincar\'{e} section with the plane $x_3=80$, and histograms, respectively. The mentioned relative small error can be remarked in the Poincar\'{e} section.
\item \label{exx3}
Not only stable periodic trajectories can be approximated by the PS algorithm, but also chaotic trajectory. Thus, suppose one intends to force the system to evolve along the chaotic trajectory corresponding to $p=0.42905$ (Figs. \ref{fig2} (b)) considering e.g. the values $p_1=0.4265$ and $p_2=0.4316$, with the scheme $[1\circ p_1,1\circ p_2]$. The result is presented in Figs. \ref{fig5}. As expected, and as shown by the phase plot (Fig. \ref{fig5} (a)), Poincar\'{e} section (Fig. \ref{fig5} (b)) and histograms (Figs. \ref{fig5} (c), (d), respectively), due to the finite time in which the PS acts, the match between the two chaotic attractors is only an asymptotical process.
 \item\label{exx4} While in the above case two chaotic attractors have been used to generate another chaotic attractor, one can approximate a chaotic attractor, e.g. the one corresponding to $p^0=0.4286$, using two values, e.g., $\mathcal{P}_2=\{0.423,0.43\}$ corresponding to stable cycles, and $\mathcal{M}_2=\{1,4\}$ for which, via \eqref{p}, $p^0=0.4286$ (see Fig. \ref{fig2} (b)). Similarly, one can approximate some stable cycle starting from two other stable cycles. For example one can obtain the stable cycle corresponding to $p=0.423$, starting from two (or more) values framing the value $0.423$.

\end{enumerate}

As mentioned at the beginning of this section, the form of the system \eqref{e0}, allows other two choices of the bifurcation parameter $p$. One of them is

\begin{equation}\label{as}
\begin{array}
[c]{cl}%
\overset{\cdot}{x}_{1}= & -0.1053x_3^2 + 2.3430\times10^{-5} x_1^2 + 0.1521x_2x_3-0.0018x_1x_2\\
\overset{\cdot}{x}_{2}= & 0.1606x_3^2+0.4404071x_2-px_1\\
\overset{\cdot}{x}_{3}= & 0.2845x_3-0.0001x_1x_3-1.2155\times10^{-5}x_1x_2+ 2.3788\times10^{-6}x_1^2,
\end{array}.
\end{equation}
with the bifurcation diagram presented in Fig. \ref{fig6}. As shown in the case of the system \eqref{asta}, by using the PS algorithm, the system \eqref{as} can be determined to evolve, e.g., along the stable cycle corresponding to $p=0.2115$ with, e.g., the scheme $[1\circ 0.211,5\circ 0.2116]$ (Fig. \ref{fig6} (b))

\begin{remark}
i) As happens often in Nature, there are many systems where, accidentally or not, one or several parameters switch less or more periodically their values such that the system changes his dynamics. As proved in \cite{danrand} the PS algorithm can be applied even randomly in the sense that given a set of parameters, $\mathcal{P}_N$, with underlying weights $\mathcal{A}_N$, changing randomly the order of the parameters, the approximation still works.

ii) The bifurcation diagram is useful to understand the way in which the PS algorithm works and also to allow easily the choice of parameters. However, as Corollary \ref{thx} shows, without a bifurcation diagram by choosing some set $\mathcal{P}_N$ with some weights $\mathcal{   M}_N$, the PS algorithm always approximates an attractor $A^0$, with $p^0$ given by \eqref{p}.
\end{remark}

\section{Attractors decomposition and Parrondo's paradox}

Denote for clarity the attractor corresponding to some parameter value $p$ with $A_p$.

Consider the attractors $A_i$, $i=1,2,...,N$ as being chaotic with behavior denoted by $chaos_i$, $i=1,2,...,N$ and $A^0$ a regular attractor, whose behavior is denoted $order$. Then, \eqref{a0} can be written symbolically as follows

\begin{equation}\label{par}
order=chaos_1+chaos_2+...+chaos_N,
\end{equation}
i.e., a generalized form of the Parrondo paradox, where only two participants are considered. The coefficients $\alpha_i$ maintain their weight role of each dynamic.

As seen before, the attractor corresponding to $p=0.423$, denoted $A_{0.423}$ (Example \ref{exx1}), has been approximated by the switched attractor obtained with the PS via the scheme $[1\circ p_1,1\circ p_2]$.
Since, conform \eqref{al}, $\alpha_1=\alpha_2=\frac{1}{2}$, and following the decomposition relation \eqref{a0}, the attractor $A_{0.423}$ can be decomposed as

\begin{equation}\label{ex1}
A_{0.423}:=A^0=\frac{1}{2}\otimes A_{0.422}\oplus\frac{1}{2}\otimes A_{0.424}.
\end{equation}
Because the two attractors used in the PS algorithm are chaotic, if one denote the chaotic behaviors by $chaos_1$ and $chaos_2$, respectively, and the obtained regular motion by $order$, the relation \eqref{ex1} can be written in parrondian terms as follows

\[
chaos_1+chaos_2=order.
\]
Therefore, in this case the PS algorithm acts as a control-like algorithm.

If one denote $\alpha_1=\alpha_2=\frac{1}{5}$, and $\alpha_3=\frac{3}{5}$ the attractor corresponding to $p=0.43$ obtained in Example \ref{exx2}, can be expressed as follows

\begin{equation}\label{ex2}
A_{0.43}=\frac{1}{5}\otimes A_{0.4265}\oplus\frac{1}{5}A_{0.4287}\oplus\frac{3}{5}A_{0.4316},
\end{equation}
which, again, can be in terms of Parrondo's paradox as the following chaos control-like

\[
chaos_1+chaos_2+chaos_3=order,
\]
where $chaos_i$, $i=1,2,3$ represent the chaotic behavior of the three used attractors in the PS algorithm.

In the case of Example \ref{exx3}, the obtained attractor $A_{0.42905}$ can be decomposed as

\begin{equation}\label{ex3}
A_{0.42905}=\frac{1}{2}\otimes A_{0.4265}\oplus\frac{1}{2} \otimes A_{0.4316},
\end{equation}
relation which can be written as
\[
chaos_1+chaos_2=chaos_3,
\]
which do not more represent a Parrondo paradox.
\noindent Considering the Example \ref{exx4}, the chaotic attractor $A_{0.4286}$, which can be decomposed as

\[
A_{0.4286}=\frac{1}{5}\otimes A_{0.423}\oplus\frac{4}{5}\otimes A_{0.43},
\]
can be expressed as an anticontrol-like, in which switching the parameter within a set of values which generate stable motion, the obtained attractor is a chaotic one
\[
order_1+order_2=chaos.
\]

Because of the PS algorithm properties, every stable or chaotic attractor can be obtained by chaos control or anticontrol-like, using the only condition that the considered value $p^0$ is framed (see \eqref{ordine}) by values $p_1$ and $p_N$ which are of opposite kind (generate chaotic behavior in the case of chaos control-like, or regular behavior in the case of anticontrol-like).

There could imagine two main situations when the algorithm can be considered as chaos control or anticontrol. The most important is the chaos control. Suppose the system evolves chaotically for some parameter value $p_1$ and one intends to change this behavior and stabilize it such to evolve along a stable attractor corresponding to the $p^0$ value which, for some objective reasons, cannot be set. Then, chosen another admissible value $p_2$, for which the system evolve chaotic, such that \eqref{p} gives for adequate weights $m_{1,2}$ the value $p^0$, the PS algorithm approximates the desired stable attractor $A^0$. Obviously, there could be chosen several parameters $p_i$, $i=1,2,...,N$ with corresponding chaotic (or regular) behaviors which generates the same attractor $A^0$ (see Remark \ref{remus1}).

\section{Conclusion and discussions}
In this paper it is shown how the PS algorithm can be used to obtain a stabilization of the chaotic behavior of a pandemic like COVID-19, modeled by the relation \eqref{e0}. In support of this idea note that the most mathematical models of integer or even fractional order describing COVID-19 can be described by \eqref{e0} as can be seen in e.g. few of the numerous works on this subject \cite{prima,doua,treia,patra, cincia,sasea,saptea,opta}.
Moreover, the algorithm proved to be useful experimentally too \cite{dan4}.
The method determines the system to change the behavior to any other of its possible regular or chaotic behavior. One of the main advantages of the PS algorithm is the fact it can be applied to most of the known dynamical systems. Also, compared to the classical methods of chaos control, where due to the way in which the parameter is changed, the obtained stable evolution has a new behavior, different to the potential system attractors, in the case of the PS algorithm, the obtained attractors belong to set of all admissible attractors. Moreover, the PS algorithm allows to generalize of the Parrondo's paradox.
Beside the possibility to approximate any desired behavior of a system modeled by \eqref{e0}, the PS algorithm provides a new and interesting possibility to express any attractor as a convex combination of other attractors, like in the considered COVID-19 system where, for example, a stable attractor can be considered as a convex combination of other, chaotic attractors.


\newpage{\pagestyle{empty}\cleardoublepage}

\begin{figure}
\begin{center}
\includegraphics[scale=0.9]{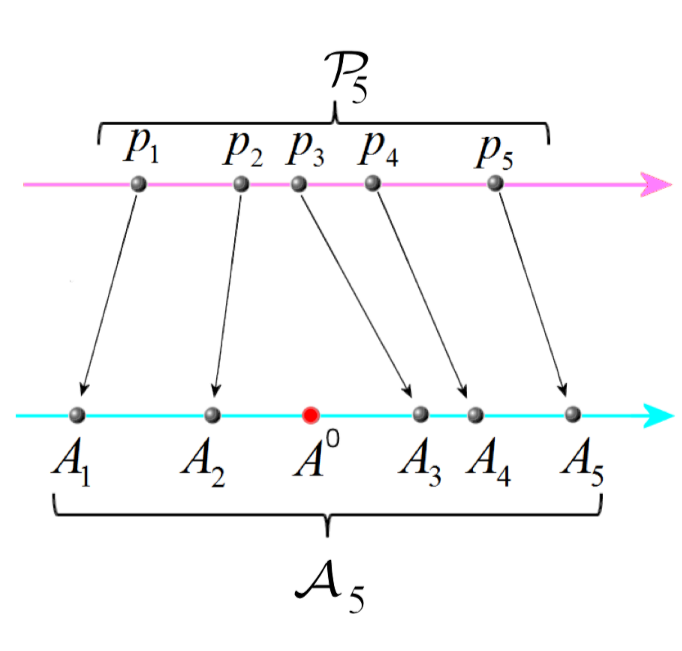}
\caption{Sketch of the sets $\mathcal{P}_5$ and $\mathcal{A}_5$ (after \cite{danx1}).}
\label{fig1}
\end{center}
\end{figure}

\begin{figure}
\begin{center}
\includegraphics[scale=0.85]{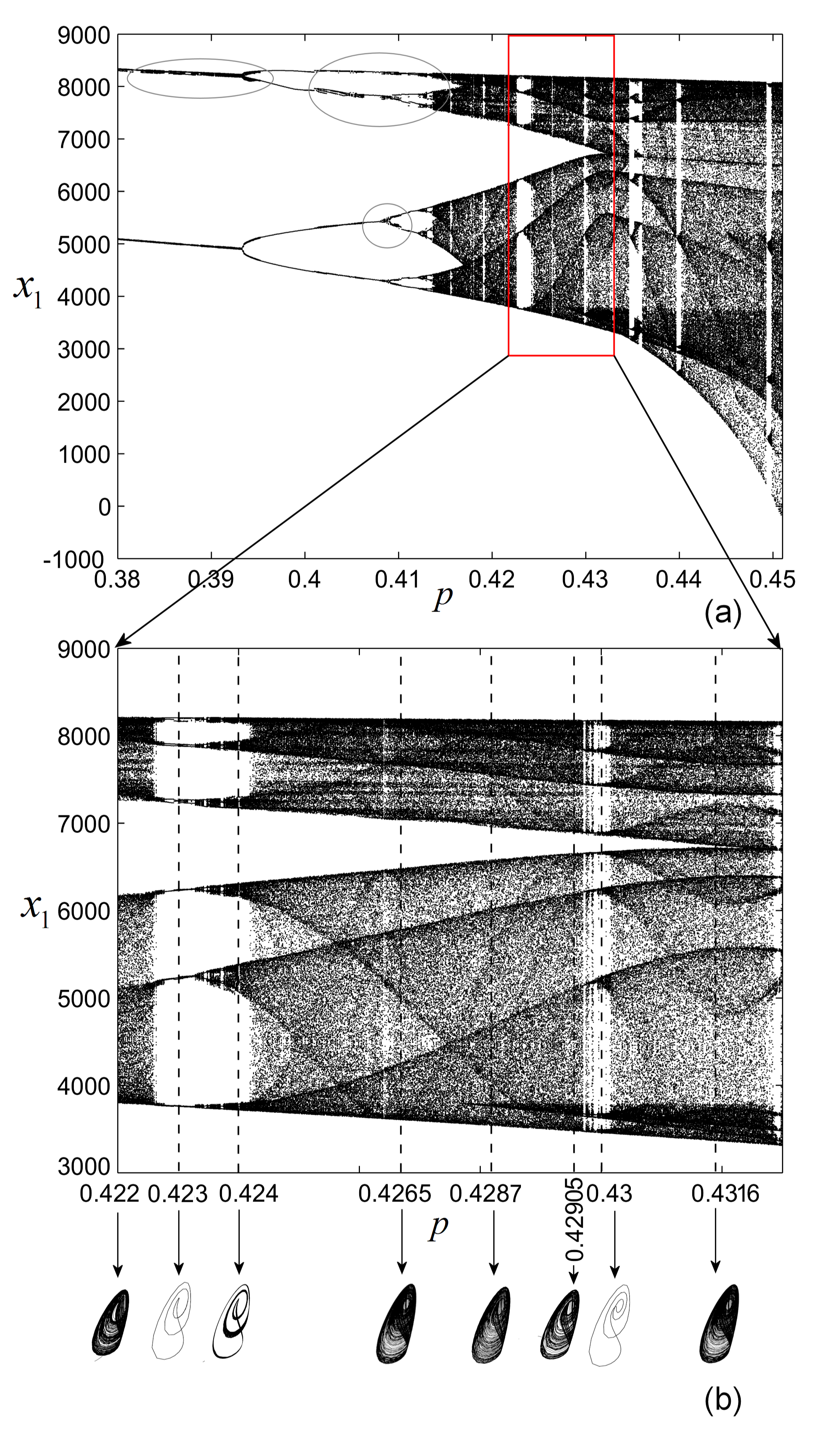}
\caption{(a) Bifurcation diagram of the first component $x_1$ of the COVID-19 system \eqref{un1}; (b) Zoomed image to unveil periodic windows.}
\label{fig2}
\end{center}
\end{figure}

\begin{figure}
\begin{center}
\includegraphics[scale=0.45]{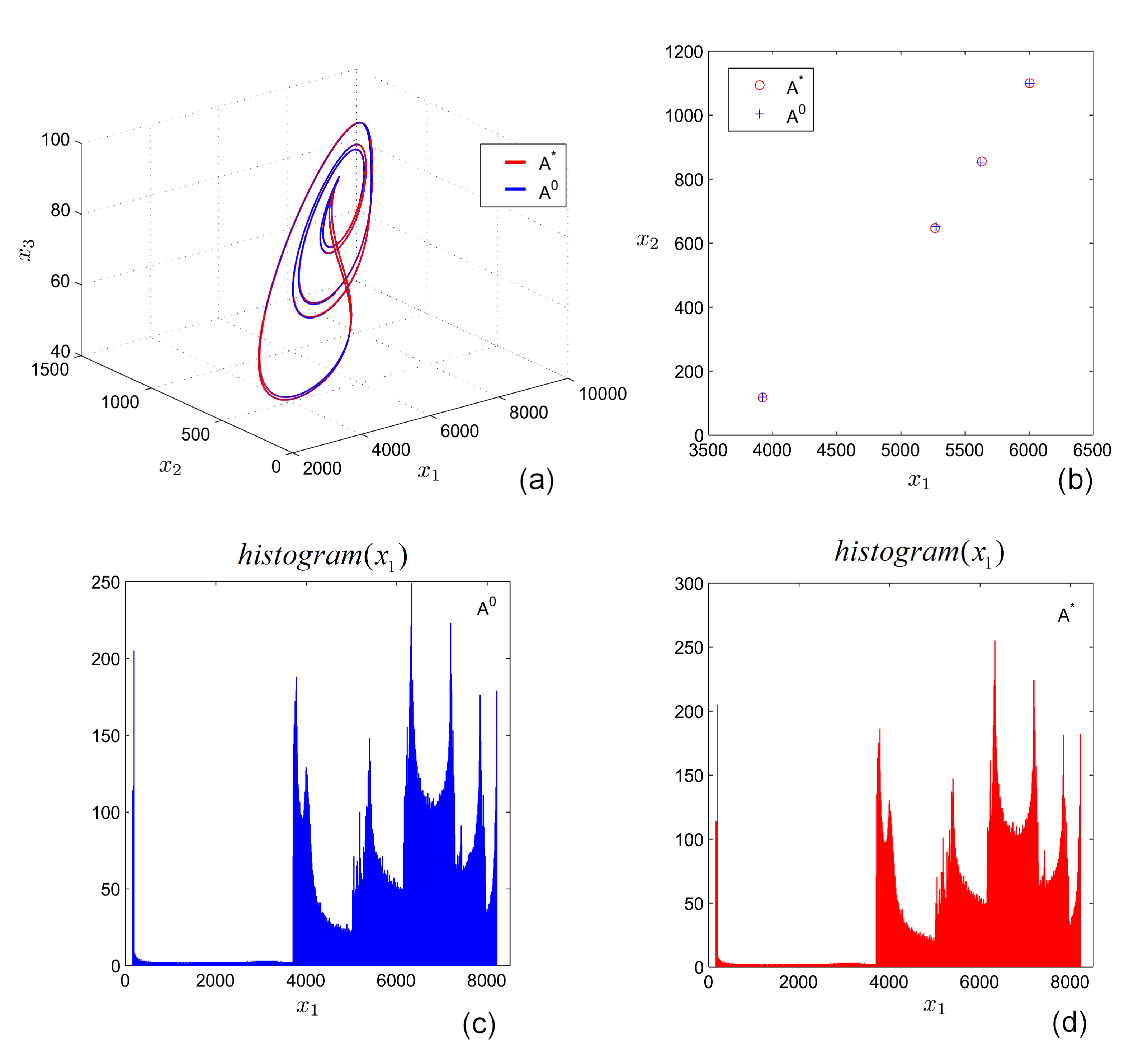}
\caption{Chaos control-like. Switched and averaged attractors $A^*$ and $A^0$ using the scheme $[1\circ p_1,1\circ p_2]$ with $p_1=0.422$ and $p_2=0.424$ (red and blue, respectively). (a) Phase plot; (b) Poincar\'{e} section with the plane $x_3=80$; (c)-(d) Histograms of the first component $x_1$ of the averaged and switched attractors, respectively. }
\label{fig3}
\end{center}
\end{figure}

\begin{figure}
\begin{center}
\includegraphics[scale=0.5]{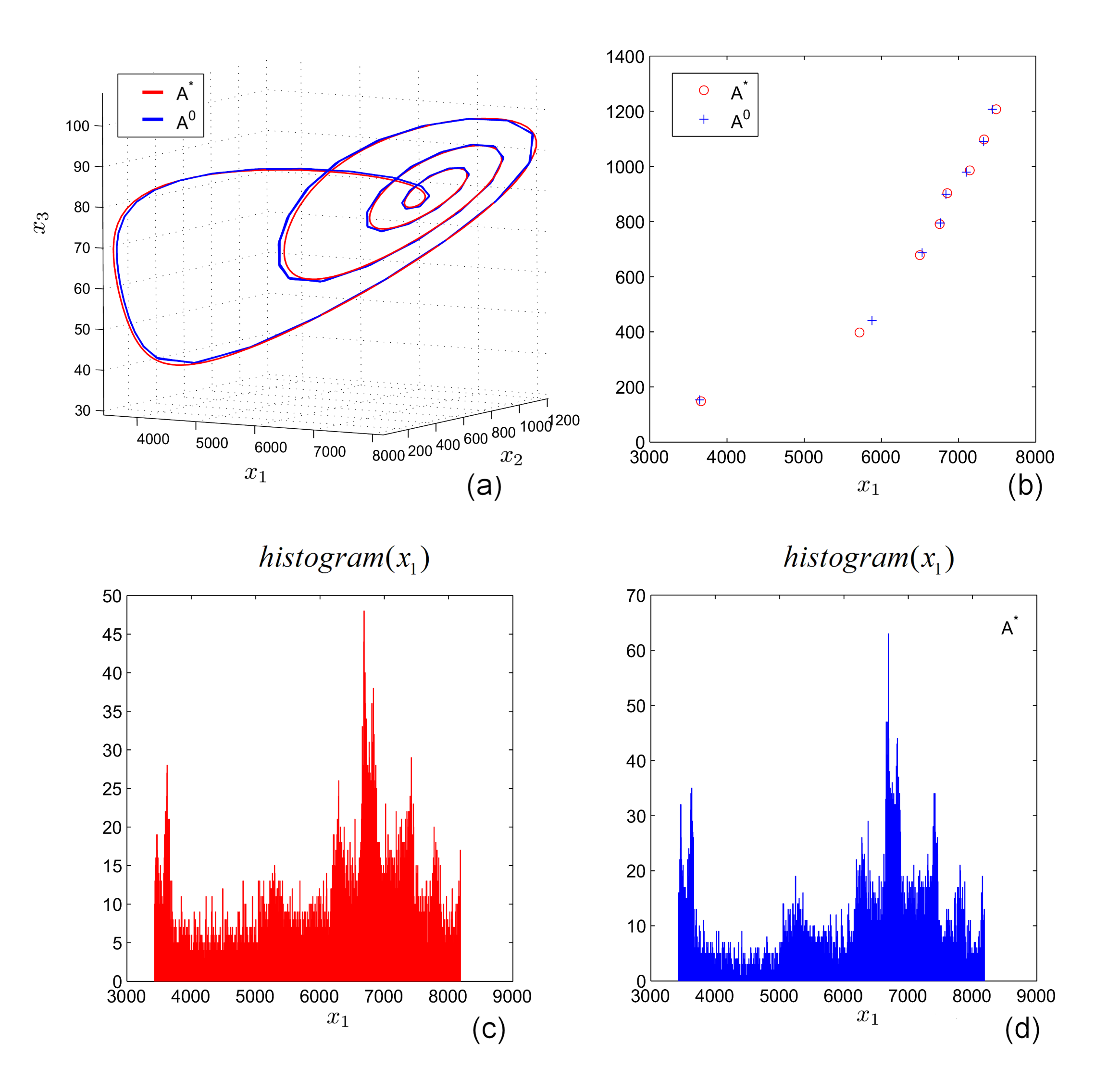}
\caption{Chaos control-like. Switched and averaged attractors $A*$ and $A^0$ using the scheme $[1\circ p_1,1\circ p_2,3\circ p_3]$ with $p_1=0.4265$, $p_2=0.4287$ and $p_3=0.4316$ (red and blue, respectively). (a) Phase plot; (b) Poincar\'{e} section with the plane $x_3=80$; (c)-(d) Histograms of the first component $x_1$ of the averaged and switched attractors, respectively. }
\label{fig4}
\end{center}
\end{figure}

\begin{figure}
\begin{center}
\includegraphics[scale=0.45]{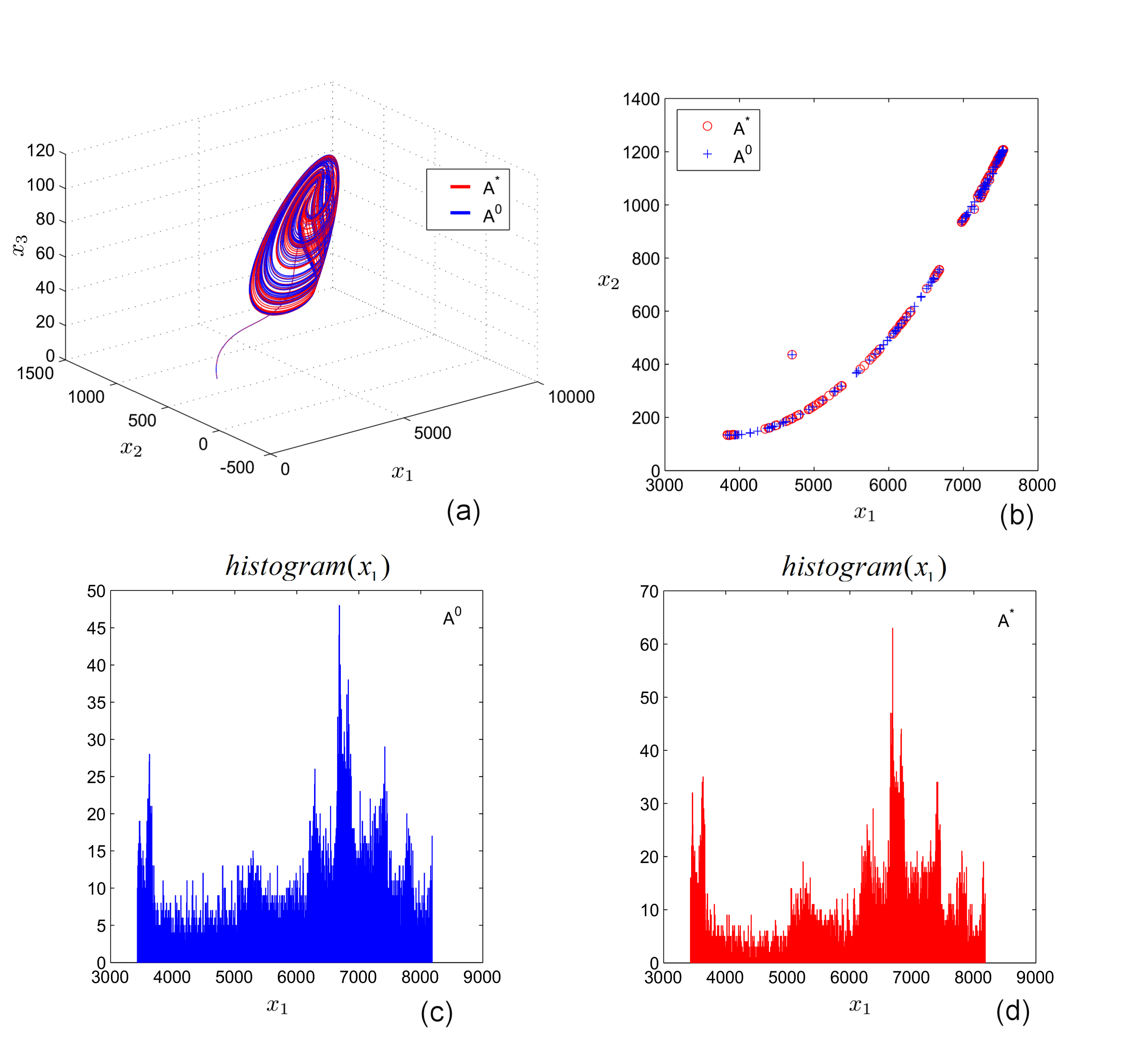}
\caption{Switched and averaged attractors $A*$ and $A^0$ using the scheme $[1\circ p_1,1\circ p_2]$ with $p_1=0.4265$ and $p_2=0.4316$ (red and blue, respectively). (a) Phase plot; (b) Poincar\'{e} section with the plane $x_3=80$; (c)-(d) Histograms of the first component $x_1$ of the averaged and switched attractors, respectively. }
\label{fig5}
\end{center}
\end{figure}

\begin{figure}
\begin{center}
\includegraphics[scale=0.65]{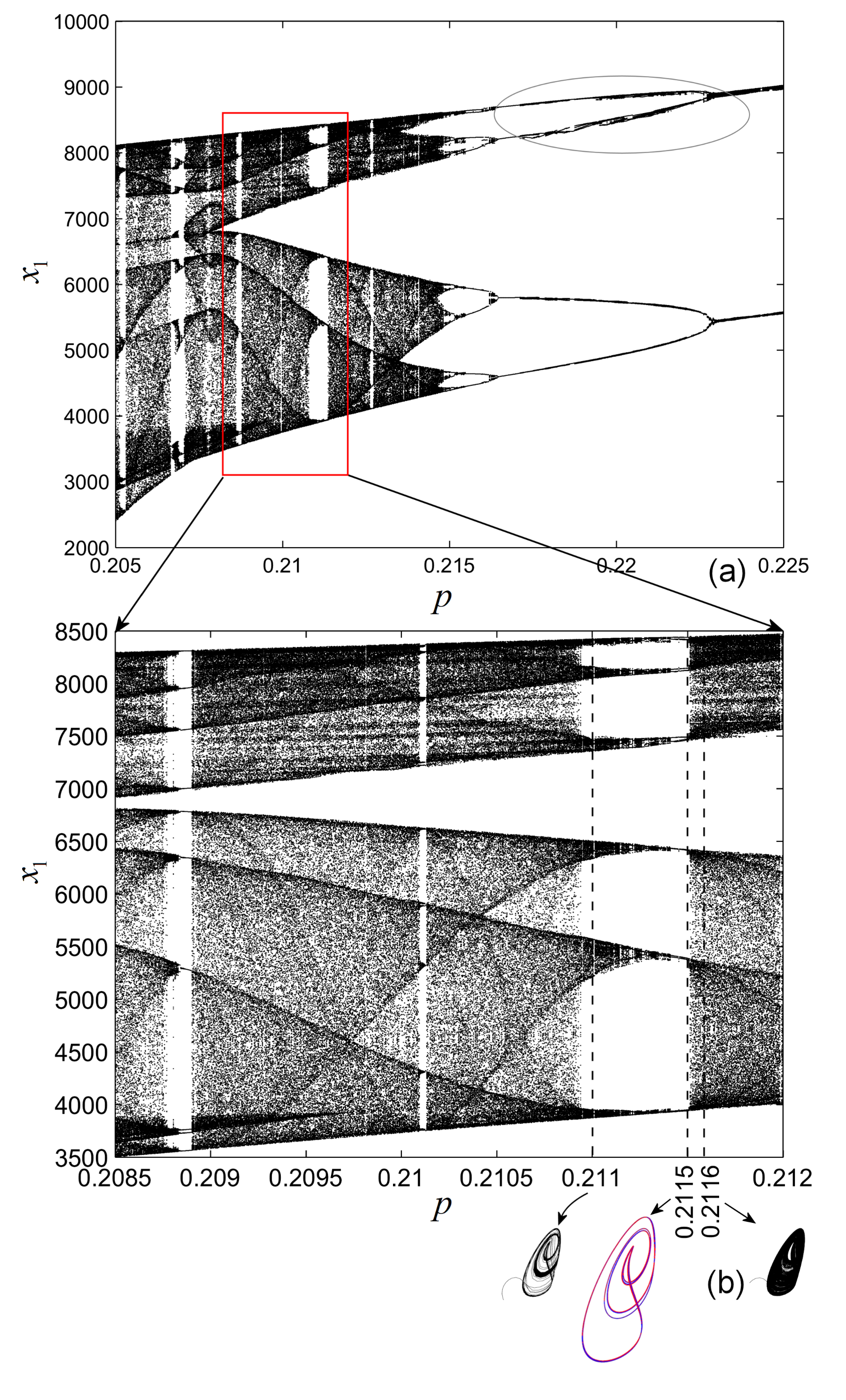}
\caption{(a) Bifurcation diagram of the first component $x_1$ of the COVID-19 system \eqref{as}; (b) Zoomed image.}
\label{fig6}
\end{center}
\end{figure}

\newpage{\pagestyle{empty}\cleardoublepage}

\appendix

\section{Sketch of the proof of Corollary 1}\label{apcoro1}
One of the existing proofs of the convergence of the PS algorithm shows that the switched solution approximates a solution of the linearized system of the averaged system \cite{dan1}.
Consider the \emph{switched system}

\begin{equation}\label{hhh}
\overset{\cdot}{y}(t)=g(y(t))+p(t/\lambda)By(t), ~ y(0)=y_0,
\end{equation}
where $p:I\rightarrow \mathbb{R}$ is a periodic function with period $T$, having the averaged value
\[
\frac{1}{T}\int_{t}^{t+T}p(u)du=q, ~~\forall t\in I.
\]
In terms of the scheme $S$ \eqref{s0}, $T=\sum_{i=1}^N m_ih$.
Also, consider the \emph{averaged system}
\begin{equation}\label{ppp}
\overset{\cdot}{x}(t)=g(x(t))+qBx(t), ~ x(0)=x_0.
\end{equation}
Let $s$ the unique solution of \eqref{ppp} (see Assumption \textbf{H1}) in whose neighborhood the linearization of \eqref{ppp} is
\[
\overset{\cdot}{\varepsilon}=[G(x(t))+qB]{\varepsilon}, ~~{\varepsilon}(0)={\varepsilon}_0,
\]
where ${\varepsilon}(t)=x(t)-s(t)$, and $G$ is the Jacobian of $g$ evaluated at $s$.

\noindent Next, by linearizing \eqref{hhh} with $e(t)=y(t)-s(t)$ one obtains
\[
\overset{\cdot}{e}(t)=[G(t)+p(t/\lambda)B]e(t), ~~ e(0)=e_0.
\]
Next it can be proved that there exists $\lambda>0$ such that $lim_{t\rightarrow \infty} ||e(t)-{\varepsilon}(t)||=\delta(\lambda^2)$, where $\delta(\lambda^2)$ is an order function. Here, $\lambda$ is used to set the length of the integration step size $h$.

\section{Sketch of the proof of Corollary 2}\label{apcoro2}

Under uniqueness assumption (\textbf{H1}), one can naturally assume that there exists a linear (bijective)
order-preserved mapping \cite{danx1} (see Fig. \ref{fig1})
\[
H:\mathcal{P}\rightarrow \mathcal{A},
\]
where $\mathcal{P}$ is the set of all admissible parameters value and $\mathcal{A}$ the set of corresponding attractors. A general way of defining $\oplus$ and $\otimes$ is

\[
\alpha\otimes A:=H(\alpha H^{-1}(A)),
\]
and
\[
A_1\oplus A_2:=H(H^{-1}(A_1)+H^{-1}(A_2)).
\]

Next, using the relations $H^{-1}(A_i)=p_i$, $H^{-1}(H(\alpha_ip_i))=\alpha_ip_i$, and the expression of $p^0$, given by \eqref{p} the relation \eqref{a0} can be easily verified.

\end{document}